\numberwithin{equation}{section}
\newcommand{\calL}{\mathcal{L}}
\newcommand{\mC}{\mathbb{C}}
\newcommand{\mD}{\mathbb{D}}
\newcommand{\mF}{\mathbb{F}}
\newcommand{\mS}{\mathbb{S}}
\newcommand{\mT}{\mathbb{T}}
\newcommand{\mZ}{\mathbb{Z}}
\newcommand{\inv}{{\textrm{inv }}}
\newcommand{\nm}{\,\rule[-.6ex]{.13em}{2.3ex}\,}
\newtheorem{theorem}{Theorem}[section]
\newtheorem{lemma}[theorem]{Lemma}
\newtheorem{corollary}[theorem]{Corollary}
\newtheorem{proposition}[theorem]{Proposition}
\theoremstyle{definition}
\theoremstyle{definition}
\newtheorem{definition}[theorem]{Definition}
\theoremstyle{definition}
\begin{document}

\keywords{$\nu$-metric, robust control, Hardy algebra, quasianalytic functions}

\subjclass{Primary 93B36; Secondary 93D15, 46J15}

\title[Extension of the $\nu$-metric]{Extension of the $\nu$-metric:
  the $H^\infty$ case}

\author{Joseph A. Ball}
\address{Department of Mathematics,
Virginia Tech.,
Blacksburg,  VA 24061,
USA.}
\email{joball@math.vt.edu}

\author{Amol J. Sasane}
\address{Department of Mathematics, Royal Institute of Technology,
    Stockholm, Sweden.}
\email{sasane@math.kth.se}

\begin{abstract}
  An abtract $\nu$-metric was introduced by Ball and Sasane, with a
  view towards extending the classical $\nu$-metric of Vinnicombe from
  the case of rational transfer functions to more general nonrational
  transfer function classes of infinite-dimensional linear control
  systems. In this short note, we give an additional concrete special
  instance of the abstract $\nu$-metric, by verifying all the
  assumptions demanded in the abstract set-up. This example links the
  abstract $\nu$-metric with the one proposed by Vinnicombe as a
  candidate for the $\nu$-metric for nonrational plants.
\end{abstract}

\maketitle

\section{Introduction}

We recall the general {\em stabilization problem} in control theory.
Suppose that $R$ is a commutative integral domain with identity
(thought of as the class of stable transfer functions) and let
$\mF(R)$ denote the field of fractions of $R$. The stabilization
problem is:

\medskip

\begin{center}
\parbox[r]{11cm}{Given $P\in (\mF(R))^{p\times
  m}$ (an unstable plant transfer function),

 find $C \in (\mF(R))^{m\times p}$ (a stabilizing controller
  transfer function),

 such that (the closed loop transfer function)
$$
H(P,C):= \left[\begin{array}{cc} P \\ I \end{array} \right]
(I-CP)^{-1} \left[\begin{array}{cc} -C & I \end{array} \right]
$$
belongs to $R^{(p+m)\times (p+m)}$ (is stable).}
\end{center}

\medskip

In the {\em robust stabilization problem}, one goes a step further.
One knows that the plant is just an approximation of reality, and so
one would really like the controller $C$ to not only stabilize the
{\em nominal} plant $P_0$, but also all sufficiently close plants $P$
to $P_0$.  The question of what one means by ``closeness'' of plants
thus arises naturally.

So one needs a function $d$ defined on
pairs of stabilizable plants such that
\begin{enumerate}
\item $d$ is a metric on the set of all stabilizable
plants,
\item $d$ is amenable to computation, and
\item stabilizability is a robust property of the plant with respect
  to this metric.
\end{enumerate}
Such a desirable metric, was introduced by Glenn Vinnicombe in
\cite{Vin} and is called the $\nu$-{\em metric}. In that paper,
essentially $R$ was taken to be the rational functions without poles
in the closed unit disk or, more generally, the disk algebra, and the
most important results were that the $\nu$-metric is indeed a metric
on the set of stabilizable plants, and moreover, one has the
inequality that if $P_0,P\in \mS(R,p,m)$, then
$$
\mu_{P,C} \geq \mu_{P_0,C}-d_{\nu}(P_0,P),
$$
where $\mu_{P,C}$ denotes the {\em stability margin} of the pair
$(P,C)$, defined by
$$
\mu_{P,C}:=\|H(P,C)\|_\infty^{-1}.
$$
This implies in particular that stabilizability is a robust property
of the plant $P$. 

The problem of what happens when $R$ is some other ring of stable
transfer functions of infinite-dimensional systems was left open in
\cite{Vin}. This problem of extending the $\nu$-metric from the
rational case to transfer function classes of infinite-dimensional
systems was addressed in \cite{BalSas}. There the starting point in
the approach was abstract. It was assumed that $R$ is any commutative
integral domain with identity which is a subset of a Banach algebra
$S$ satisfying certain assumptions, labelled (A1)-(A4), which are
recalled in Section~\ref{section_abstract_nu_metric}.  Then an
``abstract'' $\nu$-metric was defined in this setup, and it was shown
in \cite{BalSas} that it does define a metric on the class of all
stabilizable plants.  It was also shown there that stabilizability is
a robust property of the plant.

In \cite{Vin}, it was suggested that the $\nu$-metric in the case when
$R=H^\infty$ might be defined as follows. Let $P_1, P_2$ be unstable
plants with the normalized left/right coprime factorizations
\begin{eqnarray*}
P_1&=& N_{1} D_{1}^{-1}= \widetilde{D}_{1}^{-1} \widetilde{N}_{1},\\
P_2&=& N_{2} D_{2}^{-1}= \widetilde{D}_{2}^{-1} \widetilde{N}_{2},
\end{eqnarray*}
where $N_1, D_1, N_2, D_2, \widetilde{N}_{1},
\widetilde{D}_{1},\widetilde{N}_{2}, \widetilde{D}_{2}$ are matrices
with $H^\infty$ entries. Then 
\begin{equation}
\label{eq_nu_metric_GV}
d_{\nu} (P_1,P_2 )=\left\{\begin{array}{ll}
\|\widetilde{G}_2 G_1\|_\infty & \textrm{if }
 T_{G_1^* G_2} \textrm{ is Fredholm with Fredholm index } 0,\\
0& \textrm{otherwise}
\end{array}
\right.
\end{equation}
Here $\cdot^\ast$ has the usual meaning, namely: $G_1^*(\zeta)$ is the
transpose of the matrix whose entries are complex conjugates of the
entries of the matrix $G_1(\zeta)$, for $\zeta \in \mT$. Also in
the above, for a matrix $M\in (L^\infty)^{p\times m}$,
$T_M:(H^2)^m\rightarrow (H^2)^p$ denotes the {\em Toeplitz operator}
given by
$$
T_M \varphi=P_{(H^2)^p}(M\varphi) \quad (\varphi \in (H^2)^m)
$$
where $M\varphi$ is considered as an element of $(L^2)^p$ and
$P_{(H^2)^p}$ denotes the canonical orthogonal projection from
$(L^2)^p$ onto $(H^2)^p$.

Although we are unable to verify whether there is a metric $d_\nu$
such that the above holds in the case of $H^\infty$, we show that the
above does work for the somewhat smaller case when $R$ is the class
$QA$ of quasicontinuous functions analytic in the unit disk. We prove
this by showing that this case is just a special instance of the
abstract $\nu$-metric introduced in \cite{BalSas}.

The paper is organized as follows:
\begin{enumerate}
\item In Section~\ref{section_abstract_nu_metric}, we recall the
  general setup and assumptions and the abstract metric $d_\nu$ from
  \cite{BalSas}.
\item In Section~\ref{section_Hinfty}, we specialize $R$ to
  a concrete ring of stable transfer functions, and
  show that our abstract assumptions hold in this particular case.
\end{enumerate}

\section{Recap of the abstract $\nu$-metric}
\label{section_abstract_nu_metric}

\noindent We recall the setup from \cite{BalSas}:
\begin{itemize}
\item[(A1)] $R$ is commutative integral domain with identity.
\item[(A2)] $S$ is a unital commutative complex semisimple Banach
  algebra with an involution $\cdot^*$, such that $R \subset S$.  We
  use $\inv S$ to denote the invertible elements of $S$.
\item[(A3)] There exists a map $\iota: \inv S \rightarrow G$, where
  $(G,+)$ is an Abelian group with identity denoted by $\circ$, and
  $\iota$ satisfies
\begin{itemize}
\item[(I1)] $\iota(ab)= \iota (a) +\iota(b)$ ($a,b \in \inv S$).
\item[(I2)] $\iota(a^*)=-\iota(a)$ ($a\in \inv S$).
\item[(I3)] $\iota$ is locally constant, that is, $\iota$ is continuous
  when $G$ is equipped with the discrete topology.
\end{itemize}
\item[(A4)] $x\in R \cap (\inv S)$ is invertible as an element of $R$
  if and only if $\iota(x)=\circ$.
\end{itemize}

\medskip 

\noindent  We recall the following standard definitions from the factorization
approach to control theory.

\medskip 

\noindent {\bf The notation $\mF(R)$:} $\mF(R)$ denotes the field of
  fractions of $R$.

\medskip

\noindent {\bf The notation $F^*$:} If $F\in R^{p\times m}$, then $F^*\in
  S^{m\times p}$ is the matrix with the entry in the $i$th row and
  $j$th column given by $F_{ji}^*$, for all $1\leq i\leq p$, and all $
  1\leq j \leq m$.

\medskip

\noindent {\bf Right coprime/normalized coprime factorization:} Given
a matrix $P \in (\mF(R))^{p\times m}$, a factorization $P=ND^{-1}$,
where $N,D$ are matrices with entries from $R$, is called a {\em right
  coprime factorization of} $P$ if there exist matrices $X, Y$ with
entries from $R$ such that $ X N + Y D=I_m$.  If moreover it holds
that $ N^{*} N +D^{*} D =I_m$, then the right coprime factorization is
referred to as a {\em normalized} right coprime factorization of $P$.

\medskip

\noindent {\bf Left coprime/normalized coprime factorization:} 
A factorization $P=\widetilde{D}^{-1}\widetilde{N}$, where
$\widetilde{N},\widetilde{D}$ are matrices with entries from $R$, is
called a {\em left coprime factorization of} $P$ if there exist
matrices $\widetilde{X}, \widetilde{Y}$ with entries from $R$ such
that $ \widetilde{N} \widetilde{X}+\widetilde{D} \widetilde{Y}=I_p.  $
If moreover it holds that $ \widetilde{N} \widetilde{N}^{*}
+\widetilde{D}\widetilde{D}^{*}=I_p, $ then the left coprime
factorization is referred to as a {\em normalized} left coprime
factorization of $P$.  

\medskip

\noindent {\bf The notation $G, \widetilde{G}, K,\widetilde{K}$:}
Given $P \in (\mF(R))^{p\times m}$ with normalized right and left
factorizations $P=N D^{-1}$ and $P= \widetilde{D}^{-1} \widetilde{N}$,
respectively, we introduce the following matrices with entries from
$R$:
  $$
  G=\left[ \begin{array}{cc} N \\ D \end{array} \right] \quad
  \textrm{and} \quad
  \widetilde{G}=\left[ \begin{array}{cc} -\widetilde{D} &
  \widetilde{N} \end{array} \right] .
  $$
  Similarly, given $C \in (\mF(R))^{m\times p}$ with normalized right
  and left factorizations $C=N_C D_C^{-1}$ and $C=
  \widetilde{D}_C^{-1} \widetilde{N}_C$, respectively, we introduce
  the following matrices with entries from $R$:
  $$
  K=\left[ \begin{array}{cc} D_C \\ N_C \end{array} \right] \quad
  \textrm{and} \quad
  \widetilde{K}=\left[ \begin{array}{cc}
      -\widetilde{N}_C & \widetilde{D}_C \end{array} \right] .
  $$

\medskip

\noindent {\bf The notation $\mS(R,p, m)$:} We denote by $\mS(R,p, m)$ the
  set of all elements  $P\in (\mF(R))^{p\times m}$ that possess a normalized
  right coprime factorization and a normalized left coprime
  factorization.

\medskip 

We now define the metric $d_\nu$ on $\mS(R, p, m)$. But first we
specify the norm we use for matrices with entries from $S$.

\begin{definition}[$\|\cdot \|$]\label{def_sup_norm}
  Let $\mathfrak{M}$ denote the maximal ideal space of the Banach
  algebra $S$.  For a matrix $M \in S^{p\times m}$, we set
\begin{equation}
\label{norm}
\|M\|= \max_{\varphi \in \mathfrak{M}} \nm {\mathbf M}(\varphi) \nm.
\end{equation}
Here ${\mathbf M}$ denotes the entry-wise Gelfand transform of $M$,
and $\nm \cdot \nm$ denotes the induced operator norm from $\mC^{m}$
to $\mC^{p}$. For the sake of concreteness, we fix the standard
Euclidean norms on the vector spaces $\mC^{m}$ to $\mC^{p}$.
\end{definition}

The maximum in \eqref{norm} exists since $\mathfrak{M}$ is a compact
space when it is equipped with Gelfand topology, that is, the
weak-$\ast$ topology induced from $\calL( S; \mC)$. Since we have
assumed $S$ to be semisimple, the Gelfand transform
$$
\widehat{\;\cdot\;}:S \rightarrow\widehat{S}\; ( \subset
C(\mathfrak{M},\mC))
$$
is an isomorphism. If $M\in S^{1\times 1}=S$, then we note that there
are two norms available for $M$: the one as we have defined above,
namely $\|M\|$, and the norm $\|\cdot\|_S$ of $M$ as an element
of the Banach algebra $S$. But throughout this article, we will use
the norm given by \eqref{norm}.

\begin{definition}[Abstract $\nu$-metric $d_\nu$]\label{def_nu_metric}
  For $P_1, P_2 \in \mS(R,p,m)$, with the normalized left/right
  coprime factorizations
\begin{eqnarray*}
P_1&=& N_{1} D_{1}^{-1}= \widetilde{D}_{1}^{-1} \widetilde{N}_{1},\\
P_2&=& N_{2} D_{2}^{-1}= \widetilde{D}_{2}^{-1} \widetilde{N}_{2},
\end{eqnarray*}
we define
\begin{equation}
\label{eq_nu_metric}
d_{\nu} (P_1,P_2 ):=\left\{
\begin{array}{ll}
  \|\widetilde{G}_{2} G_{1}\| &
  \textrm{if } \det(G_1^* G_2) \in \inv S \textrm{ and }
  \iota (\det (G_1^* G_2))=\circ, \\
  1 & \textrm{otherwise}. \end{array}
\right.
\end{equation}
\end{definition}

The following was proved in \cite{BalSas}:

\begin{theorem}
\label{thm_d_nu_is_a_metric}
$d_\nu$ given by \eqref{eq_nu_metric} is a metric on $\mS(R, p, m)$.
\end{theorem}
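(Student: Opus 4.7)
My plan is to verify the four metric axioms in turn: $d_\nu\in[0,1]$, definiteness, symmetry, and the triangle inequality. The algebraic backbone is the identities built into normalized factorizations: $G^*G=I_m$, $\widetilde G\widetilde G^*=I_p$, $\widetilde G G=0$, and the resulting unitarity of the square matrix $[G\;\widetilde G^*]\in S^{(p+m)\times(p+m)}$, which in particular yields $GG^*+\widetilde G^*\widetilde G=I_{p+m}$. A preliminary point is that $d_\nu$ is independent of the chosen normalized factorizations: two such differ by right-multiplication by a unitary $U$ over $S$, and invariance follows from (I1), (I2), and the unitary-invariance of $\|\cdot\|$.

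The range, definiteness, and symmetry are the light steps. The bound $\|\widetilde G_2 G_1\|\le 1$ follows by evaluating the Gelfand transform at any $\varphi\in\mathfrak M$: $\widehat{G_1}(\varphi)$ is an isometry, $\widehat{\widetilde G_2}(\varphi)$ a coisometry, and the product is contractive. For definiteness, $d_\nu(P,P)=0$ uses $\widetilde G G=0$ together with $\iota(\det(G^*G))=\iota(1)=\circ$; conversely, $\widetilde G_2 G_1=0$ rewrites as $\widetilde D_2 N_1=\widetilde N_2 D_1$, which forces $P_1=P_2$. For symmetry, the identity $G_2^*G_1=(G_1^*G_2)^*$ combined with (I2) symmetrises the index condition, and the block matrix
\[
U_2^*U_1 \;=\; \begin{bmatrix} G_2^*G_1 & G_2^*\widetilde G_1^* \\ \widetilde G_2 G_1 & \widetilde G_2\widetilde G_1^*\end{bmatrix}
\]
is unitary pointwise on $\mathfrak M$; the $2\times 2$ block CS decomposition then identifies the singular values of the off-diagonal blocks $\widetilde G_2 G_1$ and $(\widetilde G_1 G_2)^*$, giving $\|\widetilde G_2 G_1\|=\|\widetilde G_1 G_2\|$.

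The triangle inequality is the bulk of the work. For the norm half, the decomposition
\[
\widetilde G_3 G_1 \;=\; \widetilde G_3\bigl(G_2 G_2^*+\widetilde G_2^*\widetilde G_2\bigr)G_1 \;=\; (\widetilde G_3 G_2)(G_2^*G_1) + (\widetilde G_3\widetilde G_2^*)(\widetilde G_2 G_1),
\]
together with $\|G_2^*G_1\|,\|\widetilde G_3\widetilde G_2^*\|\le 1$, yields $\|\widetilde G_3 G_1\|\le\|\widetilde G_3 G_2\|+\|\widetilde G_2 G_1\|$; default-value-$1$ cases are absorbed by $d_\nu\le 1$. The harder half is index propagation: when $d_\nu(P_1,P_2)<1$ and $d_\nu(P_2,P_3)<1$, one must deduce $\det(G_1^*G_3)\in\inv S$ with $\iota(\det(G_1^*G_3))=\circ$. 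My plan is to compare the $(1,1)$-blocks in $U_1^*U_3=(U_1^*U_2)(U_2^*U_3)$, which gives $G_1^*G_3=(G_1^*G_2)(G_2^*G_3)+G_1^*\widetilde G_2^*\widetilde G_2 G_3$, and then to homotope $G_1^*G_3$ to $(G_1^*G_2)(G_2^*G_3)$ through invertible elements of $S$ by scaling the error term; (I1) handles the principal factor, and (I3) transports the trivial index across the homotopy. I expect this step to be the main obstacle: controlling invertibility along the homotopy is precisely where the norm bounds from the first half must be converted into a topological statement about $\iota$, and it is here that (A4) does its essential work.
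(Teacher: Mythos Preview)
The present paper does not actually prove this theorem; it merely quotes it from \cite{BalSas}, so there is no in-paper argument to compare against. Your outline is, however, essentially the standard route used in \cite{BalSas}: exploit the pointwise unitarity of $U_i=[G_i\ \widetilde G_i^{\,*}]$ on $\mathfrak M$, use the resolution $I=G_2G_2^*+\widetilde G_2^{\,*}\widetilde G_2$ for the norm triangle inequality, and run a linear homotopy between $(G_1^*G_2)(G_2^*G_3)$ and $G_1^*G_3$ to transport the index via (I3).

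There is one misplacement of hypotheses worth flagging. In your well-definedness step you claim invariance under $G_i\mapsto G_iU_i$ follows from (I1), (I2) alone; it does not. You get $\iota(\det((G_1U_1)^*G_2U_2))=\iota(\det(G_1^*G_2))-\iota(\det U_1)+\iota(\det U_2)$, and nothing in (I1)--(I3) forces $\iota(\det U_i)=\circ$ (think of $\zeta\in QC$: unitary, but with nonzero Toeplitz index). What saves you is that the $U_i$ linking two normalized coprime factorizations lie in $GL_m(R)$, so $\det U_i$ is a unit of $R$ and (A4) gives $\iota(\det U_i)=\circ$. Conversely, (A4) is \emph{not} where the work happens in the triangle-inequality homotopy: once you restrict to the only nontrivial case $a+b<1$ with $a=\|\widetilde G_2G_1\|$, $b=\|\widetilde G_3G_2\|$, pointwise unitarity gives $\sigma_{\min}\big(\widehat{G_1^*G_2}(\varphi)\,\widehat{G_2^*G_3}(\varphi)\big)\ge\sqrt{(1-a^2)(1-b^2)}>ab\ge\|\widehat{G_1^*\widetilde G_2^{\,*}}(\varphi)\,\widehat{\widetilde G_2G_3}(\varphi)\|$, so $\det h(t)$ is nowhere vanishing on $\mathfrak M$, hence in $\inv S$, and (I1), (I3) finish the job. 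So move your invocation of (A4) from the homotopy to the well-definedness paragraph and supply the singular-value estimate above, and the sketch is complete.
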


\begin{definition}
   Given $P \in (\mF(R))^{p\times m}$ and $C\in (\mF(R))^{m\times p}$,
   the {\em stability margin} of the pair $(P,C)$ is defined by
$$
\mu_{P,C}=\left\{ \begin{array}{ll}
\|H(P,C)\|_{\infty}^{-1} &\textrm{if }P \textrm{ is stabilized by }C,\\
0 & \textrm{otherwise.}
\end{array}\right.
$$
\end{definition}

The number $\mu_{P,C}$ can be interpreted as a measure of the
performance of the closed loop system comprising $P$ and $C$: larger
values of $\mu_{P,C}$ correspond to better performance, with
$\mu_{P,C}>0$ if $C$ stabilizes $P$.
 
The following was proved in \cite{BalSas}:

\begin{theorem}
  If $P_0,P\in \mS(R,p,m)$ and $C\in \mS(R, m,p)$, then
$$
  \mu_{P,C} \geq \mu_{P_0,C}-d_{\nu}(P_0,P).
$$
\end{theorem}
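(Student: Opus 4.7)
The plan is to pass from the definition $\mu_{P,C} = \|H(P,C)\|^{-1}$ to the more tractable expression $\mu_{P,C} = \|(\widetilde{K}G)^{-1}\|^{-1}$. This holds because the normalizations $G^{*}G = I_m$ and $\widetilde{K}\widetilde{K}^{*} = I_m$ make $G(\varphi)$ a columnwise isometry and $\widetilde{K}(\varphi)$ a rowwise co-isometry for every $\varphi$ in the maximal ideal space $\mathfrak{M}$ of $S$, so pre- and post-multiplication preserves pointwise operator norm in the factorization $H(P,C) = G(\widetilde{K}G)^{-1}\widetilde{K}$. One may then reduce to the case $d_\nu(P_0, P) < \mu_{P_0, C}$, since the inequality is trivial otherwise. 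In that case $d_\nu(P_0, P) < 1$, so by Definition~\ref{def_nu_metric}, $G_0^{*}G \in \inv S^{m\times m}$ and $\iota(\det G_0^{*}G) = \circ$; and by the symmetry of $d_\nu$ (Theorem~\ref{thm_d_nu_is_a_metric}), one also has $d_\nu(P_0, P) = \|\widetilde{G}_0 G\|$.

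The central input is the pointwise orthogonal decomposition $I_{p+m} = G_0 G_0^{*} + \widetilde{G}_0^{*}\widetilde{G}_0$ (the columns of $G_0(\varphi)$ and rows of $\widetilde{G}_0(\varphi)$ pick out complementary orthogonal subspaces of $\mathbb{C}^{p+m}$, by the normalization relations), which yields
\begin{equation*}
\widetilde{K}G = (\widetilde{K}G_0)(G_0^{*}G) + (\widetilde{K}\widetilde{G}_0^{*})(\widetilde{G}_0 G).
\end{equation*}
The parallel identity $I_m = (\widetilde{K}G_0)(\widetilde{K}G_0)^{*} + (\widetilde{K}\widetilde{G}_0^{*})(\widetilde{K}\widetilde{G}_0^{*})^{*}$ (from $\widetilde{K}\widetilde{K}^{*} = I_m$) produces the sharp pointwise bound $\|\widetilde{K}\widetilde{G}_0^{*}(\varphi)\|_{\mathrm{op}} = \sqrt{1 - \mu_0(\varphi)^2}$, where $\mu_0(\varphi)$ denotes the smallest singular value of $(\widetilde{K}G_0)(\varphi)$. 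For a unit vector $u \in \mathbb{C}^m$ at $\varphi \in \mathfrak{M}$, set $\alpha = \|G_0^{*}Gu\|$ and $\beta = \|\widetilde{G}_0 Gu\|$; then $\alpha^2+\beta^2 = \|Gu\|^2 = 1$, $\beta \leq d_\nu(P_0, P)$, and the decomposition together with the two norm bounds gives
\begin{equation*}
\|(\widetilde{K}G)(\varphi)u\| \;\geq\; \mu_0(\varphi)\,\alpha - \sqrt{1-\mu_0(\varphi)^2}\,\beta \;\geq\; \mu_0(\varphi)\sqrt{1-d_\nu^2} - \sqrt{1-\mu_0(\varphi)^2}\,d_\nu,
\end{equation*}
the second inequality by monotonicity in $\beta$. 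Taking the infimum over $\varphi$ and $u$ yields $\mu_{P,C} \geq \mu_{P_0,C}\sqrt{1-d_\nu^2} - \sqrt{1-\mu_{P_0,C}^2}\,d_\nu = \sin(\arcsin\mu_{P_0,C} - \arcsin d_\nu)$. The elementary identity $\sin a - \sin b = 2\cos(\tfrac{a+b}{2})\sin(\tfrac{a-b}{2}) \leq 2\cos(\tfrac{a-b}{2})\sin(\tfrac{a-b}{2}) = \sin(a-b)$ for $0 \leq b \leq a \leq \pi/2$ then gives the stated bound $\mu_{P,C} \geq \mu_{P_0,C} - d_\nu(P_0, P)$.

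One preliminary step must be dispatched: $\widetilde{K}G$ must be invertible in $R^{m\times m}$ (i.e., $C$ must stabilize $P$), not merely in $S^{m\times m}$. By (A4), this reduces to $\iota(\det\widetilde{K}G) = \circ$. Factor $\widetilde{K}G = (\widetilde{K}G_0)(I_m + A)(G_0^{*}G)$ with $A = (\widetilde{K}G_0)^{-1}(\widetilde{K}\widetilde{G}_0^{*})(\widetilde{G}_0 G)(G_0^{*}G)^{-1}$; the reduction $d_\nu(P_0, P) < \mu_{P_0, C}$ makes $\|A(\varphi)\|_{\mathrm{op}} < 1$ pointwise, so $t \mapsto I_m + tA$ lies in $\inv S^{m\times m}$ for $t \in [0,1]$ and local constancy of $\iota$ (axiom (I3)) forces $\iota(\det(I_m+A)) = \iota(1) = \circ$. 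Combined with $\iota(\det\widetilde{K}G_0) = \circ$ (from (A4) applied to the stabilization of $P_0$) and $\iota(\det G_0^{*}G) = \circ$, the homomorphism property (I1) then yields $\iota(\det\widetilde{K}G) = \circ$, and (A4) gives $\widetilde{K}G \in \inv R^{m\times m}$. The main subtlety is the sharp pointwise bound $\|\widetilde{K}\widetilde{G}_0^{*}\|_{\mathrm{op}} = \sqrt{1 - \mu_0^2}$: the trivial bound $\leq 1$ would produce only $\mu_{P,C} \geq \mu_{P_0,C}\sqrt{1 - d_\nu^2} - d_\nu$, which is strictly weaker than the stated inequality.
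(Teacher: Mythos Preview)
The paper itself does not supply a proof of this theorem: it is quoted verbatim from \cite{BalSas} with the preface ``The following was proved in \cite{BalSas},'' so there is no in-paper argument to compare against. Your proposal must therefore be judged on its own merits, and it is correct. The reduction $\mu_{P,C}=\|(\widetilde KG)^{-1}\|^{-1}$ via $H(P,C)=G(\widetilde KG)^{-1}\widetilde K$ and the isometry/co-isometry property of $G$ and $\widetilde K$ is standard; the orthogonal decomposition $I_{p+m}=G_0G_0^{*}+\widetilde G_0^{*}\widetilde G_0$ is exactly the graph/cograph splitting used in the original $\nu$-metric theory; and your pointwise singular-value estimate, the homotopy $t\mapsto I_m+tA$ to force $\iota(\det(I_m+A))=\circ$, and the trigonometric reduction $\sin(a-b)\ge\sin a-\sin b$ are all sound. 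One small check you glossed over: the bound $\|A(\varphi)\|<1$ relies not only on $\|(\widetilde KG_0)^{-1}\|=1/\mu_0$ and $\|\widetilde K\widetilde G_0^{*}\|=\sqrt{1-\mu_0^{2}}$ but also on the companion identity $(G_0^{*}G)^{*}(G_0^{*}G)+(\widetilde G_0G)^{*}(\widetilde G_0G)=I_m$, which gives $\|(G_0^{*}G)^{-1}(\varphi)\|=\bigl(1-\|\widetilde G_0G(\varphi)\|^{2}\bigr)^{-1/2}$; with this the product bound becomes $\bigl(\sqrt{1-\mu_0^{2}}/\mu_0\bigr)\cdot\bigl(\beta/\sqrt{1-\beta^{2}}\bigr)<1$ precisely because $\mu_0(\varphi)\ge\mu_{P_0,C}>d_\nu\ge\|\widetilde G_0G(\varphi)\|$. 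This is implicit in what you wrote but worth making explicit. Your argument is essentially the one carried out in \cite{BalSas}, and in fact you obtain the sharper ``arcsine'' inequality $\mu_{P,C}\ge\sin(\arcsin\mu_{P_0,C}-\arcsin d_\nu)$ along the way.
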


The above result says that stabilizability is a robust property of the
plant, since if $C$ stabilizes $P_0$ with a stability margin
$\mu_{P,C}>m$, and $P$ is another plant which is close to $P_0$ in the
sense that $d_\nu(P,P_0)\leq m$, then $C$ is also guaranteed to
stabilize $P$.  

\section{The $\nu$-metric when $R=QA$}
\label{section_Hinfty}

Let $H^\infty$ be the Hardy algebra, consisting of all bounded and
holomorphic functions defined on the open unit disk $\mD:= \{ z\in
\mC: |z| <1\}$. 

As was observed in the Introduction, it was suggested in \cite{Vin} to
use \eqref{eq_nu_metric_GV} to define a metric on the quotient ring of
$H^{\infty}$.  It is tempting to try to do this by using the general
setup of \cite{BalSas} with $R = H^{\infty}$, $S = L^{\infty}$ and
with $\iota$ equal to the Fredholm index of the associated Toeplitz
operator.  However at this level of generality there is no guarantee
that $\varphi$ invertible in $L^{\infty}$ implies that $T_{\varphi}$
is Fredholm (and hence $\iota$ equal to the Fredholm index of the
associated Toeplitz operator is not well-defined on $\textrm{inv }S$
(condition (A3)).  However a perusal of the extensive literature on
Fredholm theory of Toeplitz operators from the 1970s leads to the
choices $R$ equal to the class $QA$ of quasianalytic and $S$ equal to
the class $QC$ of quasicontinuous functions as conceivably the most
general subalgebras of $H^{\infty}$ and $L^{\infty}$ which fit the
setup of \cite{BalSas}, as we now explain.


The notation $QC$ is used for the $C^*$-subalgebra of $L^\infty(\mT)$
of {\em quasicontinuous} functions:
$$
QC:= (H^\infty + C(\mT) ) \cap \overline{(H^\infty + C(\mT) )}.
$$
An alternative characterization of $QC$ is the following:
$$
QC=L^\infty \cap VMO,
$$
where $VMO$ is the class of functions of vanishing mean oscillation
\cite[Theorem~2.3, p.368]{Gar}. 

The Banach algebra $QA$ of analytic quasicontinuous functions is
$$
QA:= H^\infty \cap QC.
$$
We have the following. 

In order to verify (A4), we will also use the result given below; see
\cite[Theorem~7.36]{Dou}.

\begin{proposition}
\label{prop_Dou}
If $f\in H^\infty(\mD)+C(\mT)$, then $T_f$ is Fredholm if and only if there exist
$\delta, \epsilon>0$ such that
$$
|F(re^{it})| \geq \epsilon \textrm{ for } 1-\delta <r<1,
$$
where $F$ is the harmonic extension of $f$ to $\mD$. Moreover, in this
case the index of $T_f$ is the negative of the winding number with
respect to the origin of the curve $F(re^{it})$ for $1-\delta <r<1$.
\end{proposition}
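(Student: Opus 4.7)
The plan is to reduce to the classical Gohberg-Coburn theory for Toeplitz operators with continuous symbol, using the Douglas algebra structure of $H^\infty + C(\mT)$. Decomposing $f = h + g$ with $h \in H^\infty$ and $g \in C(\mT)$, the harmonic extension splits as $F = h + G$ with $G \in C(\overline{\mD})$, so the nontrivial boundary behaviour of $F$ is carried by the $H^\infty$ summand.

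For the necessity half of the Fredholm characterization, I would exploit the normalized reproducing kernels $\widehat{k}_\lambda(z) = (1-|\lambda|^2)^{1/2}/(1-\bar\lambda z)$, which are unit vectors in $H^2$ satisfying $\widehat{k}_\lambda \rightharpoonup 0$ weakly as $|\lambda| \to 1$ and $\langle T_f \widehat{k}_\lambda, \widehat{k}_\lambda\rangle = F(\lambda)$ by the Poisson integral formula. Given a sequence $\lambda_n \in \mD$ with $|\lambda_n| \to 1$ and $|F(\lambda_n)| \to 0$, one uses the identity $T_f^* T_f = T_{|f|^2} - H_f^* H_f$ together with Hartman's theorem (compactness of the Hankel operator $H_f$ for $f \in H^\infty + C(\mT)$) and a localization at the cluster points of $\{\lambda_n\}$ on $\mT$ to produce a singular sequence for $T_f$, contradicting Fredholmness on the finite-codimensional complement of $\ker T_f$.

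For sufficiency and the index formula, fix $r_0 \in (1-\delta, 1)$. Then $F_{r_0}(e^{it}) := F(r_0 e^{it})$ is a continuous, nowhere-vanishing function on $\mT$, and its winding number $n$ around $0$ is independent of $r_0 \in (1-\delta, 1)$ by homotopy invariance within $C(\mT, \mC \setminus \{0\})$. Coburn's theorem gives that $T_{F_{r_0}}$ is Fredholm with index $-n$, so it suffices to show that $T_f$ and $T_{F_{r_0}}$ lie in the same path-component of the semigroup of Fredholm operators on $H^2$. This is the main obstacle: dilations in $H^\infty$ do not converge in $L^\infty$-norm, so one cannot argue by direct continuity of the index. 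The correct framework is the Calkin algebra, where the semi-commutator identity $T_{\varphi\psi} - T_\varphi T_\psi = H_{\bar\varphi}^* H_\psi$ and Hartman's theorem together make the map $f \mapsto T_f + \calK(H^2)$ a $*$-homomorphism on the Douglas algebra $H^\infty + C(\mT)$. The problem thus reduces to constructing a continuous path from $f$ to $F_{r_0}$ through invertible elements of $H^\infty + C(\mT)$, whose existence rests on Sarason's fibering of the maximal ideal space of this algebra over $\mT$; tracking the winding number of $F_r$ along this path yields $\text{ind}(T_f) = -n$.
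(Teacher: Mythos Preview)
The paper does not give its own proof of this proposition; it is quoted verbatim from Douglas's book \cite[Theorem~7.36]{Dou} and used as a black box in the verification of (A4). So there is no paper argument to compare against, and I will simply assess your sketch on its own merits.

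Your sufficiency/index half is along the right lines: reducing to Coburn's theorem for the continuous dilate $F_{r_0}$ via a homotopy of Fredholm operators, with the semicommutator identity and Hartman's theorem supplying the multiplicativity of $f\mapsto T_f+\calK(H^2)$ on $H^\infty+C(\mT)$. (One small slip: $H^\infty+C(\mT)$ is not self-adjoint, so call this a homomorphism, not a $*$-homomorphism.) The delicate point you flag---that $r\mapsto F_r$ is not $L^\infty$-continuous at $r=1$---is real, and in Douglas's treatment it is handled via Sarason's identification $M(H^\infty+C(\mT))=M(H^\infty)\setminus\mD$ rather than by exhibiting an explicit path; your sketch is honest about deferring to that machinery.

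The necessity half, however, has a genuine gap. From $|F(\lambda_n)|\to 0$ you want a singular sequence for $T_f$, and you propose to control $\|T_f\widehat{k}_{\lambda_n}\|^2=\langle T_{|f|^2}\widehat{k}_{\lambda_n},\widehat{k}_{\lambda_n}\rangle-\|H_f\widehat{k}_{\lambda_n}\|^2$. The Hankel term does vanish, but the first term is the Poisson extension of $|f|^2$ at $\lambda_n$, which is \emph{not} controlled by $|F(\lambda_n)|^2=|\widetilde{f}(\lambda_n)|^2$. Indeed, for an infinite Blaschke product $B\in H^\infty\subset H^\infty+C(\mT)$ with zeros $\lambda_n$, one has $F(\lambda_n)=B(\lambda_n)=0$ while $|B|^2\equiv 1$ on $\mT$, so $\|T_B\widehat{k}_{\lambda_n}\|=1$ for all $n$; there is no singular sequence for $T_B$ (it is an isometry), and the failure of Fredholmness lies entirely on the cokernel side. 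The ``localization'' you invoke cannot repair this.

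The fix is to attack $T_f^*$ instead. Writing $f=h+g$ with $h\in H^\infty$ and $g\in C(\mT)$, use the eigenvector relation $T_h^*\widehat{k}_\lambda=\overline{h(\lambda)}\,\widehat{k}_\lambda$ together with the fact that $|\widehat{k}_\lambda|^2$ is the Poisson kernel $P_\lambda$: if $\lambda_n\to\zeta_0\in\mT$ then $\|(g-g(\zeta_0))\widehat{k}_{\lambda_n}\|_{L^2}^2=\int|g-g(\zeta_0)|^2 P_{\lambda_n}\to 0$ by continuity of $g$, whence
\[
T_f^*\widehat{k}_{\lambda_n}=\overline{h(\lambda_n)}\,\widehat{k}_{\lambda_n}+T_{\bar g}\widehat{k}_{\lambda_n}
=\overline{\bigl(h(\lambda_n)+g(\zeta_0)\bigr)}\,\widehat{k}_{\lambda_n}+o(1)
=\overline{F(\lambda_n)}\,\widehat{k}_{\lambda_n}+o(1)\to 0.
\]
Since $\widehat{k}_{\lambda_n}\rightharpoonup 0$, this is a genuine singular sequence for $T_f^*$, and $T_f$ cannot be Fredholm.
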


\begin{theorem}
\label{lemma_disk_algebra}
  Let
\begin{eqnarray*}
  R&:=& QA, \\
  S&:=& QC, \phantom{\Big(} \\
  G&:=& \mZ, \\
  \iota&:=&\Big(\varphi (\in \textrm{\em inv } QC) \mapsto \textrm{\em Fredholm index of } T_{\varphi}(\in \mZ)\Big).
\end{eqnarray*}
Then {\em (A1)-(A4)} are satisfied.
\end{theorem}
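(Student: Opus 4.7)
The plan is to verify (A1)--(A4) in turn, with the bulk of the argument concentrated in (A4). Axioms (A1) and (A2) are essentially definitional: $QA = H^\infty \cap QC \subset H^\infty$ is a subring (containing $1$) of an integral domain, and $QC = (H^\infty + C(\mT)) \cap \overline{(H^\infty + C(\mT))}$ is a closed $*$-subalgebra of $L^\infty(\mT)$, hence a unital commutative semisimple $C^*$-algebra, which visibly contains $QA$.

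For (A3), I would first argue that $\iota$ is well-defined on $\inv QC$, i.e.\ that $T_\varphi$ is Fredholm whenever $\varphi$ is invertible in $QC$. Since $QC \subset H^\infty + C(\mT)$, Proposition~\ref{prop_Dou} reduces this to a pointwise lower bound on the harmonic extension $F$ of $\varphi$ near $\mT$. Invertibility of $\varphi$ in $L^\infty$ yields $|\varphi|\geq \epsilon$ a.e.\ on $\mT$, and combining this with the characterization $QC = L^\infty \cap VMO$ propagates the bound to the harmonic extension, so that $|F(re^{it})|\geq \epsilon/2$ for $r$ sufficiently close to $1$. The properties (I1)--(I3) then follow from standard Toeplitz theory: (I1) from the compactness of the semicommutator $T_{\varphi\psi} - T_\varphi T_\psi$ whenever $\varphi,\psi \in H^\infty + C(\mT)$; (I2) from $T_{\bar\varphi} = T_\varphi^*$; and (I3) from $\|T_\varphi\|\leq \|\varphi\|_\infty$ combined with the local constancy of the Fredholm index on Fredholm operators.

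The substantive step is (A4). Let $x \in QA$ be invertible in $QC$. The easy direction: if $x$ is invertible in $QA$, then $1/x \in H^\infty$, so $|x(z)|\geq \delta > 0$ throughout $\mD$, and Proposition~\ref{prop_Dou} gives $\iota(x) = 0$ since the image curve has winding number $0$. For the converse, suppose $\iota(x) = 0$. Since $x$ is holomorphic on $\mD$, it equals its own harmonic extension, so Proposition~\ref{prop_Dou} yields $|x(z)|\geq \epsilon$ on some annulus $1-\delta < |z| < 1$. I would then pass to the inner--outer factorization $x = B\cdot S\cdot O$: the lower bound forces $S$ (singular inner) to be trivial, since a nontrivial singular inner function has radial limit $0$ at each point in the support of its defining singular measure, and forces all zeros of $B$ to lie in $|z|\leq 1-\delta$, so that $B$ is a \emph{finite} Blaschke product. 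The argument principle on $|z|=r$ for $r$ close to $1$ gives that the winding number of $x(re^{it})$ around $0$ equals $\deg B$; since this winding number is $-\iota(x) = 0$, $B$ is constant and $x$ is outer. Combined with $|x|\geq \epsilon$ a.e.\ on $\mT$, this yields $1/x \in H^\infty$; together with $1/x\in QC$ from invertibility in $QC$, one obtains $1/x \in H^\infty \cap QC = QA$, as required.

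The main anticipated obstacle is the well-definedness of $\iota$ on $\inv QC$: one must rule out the possibility that $T_\varphi$ fails to be Fredholm even though $\varphi$ is invertible in the symbol algebra $QC$. This is precisely the place where the extra regularity of $QC$ over $L^\infty$ (namely $QC \subset VMO$) is essential, so as to push the almost-everywhere lower bound for $\varphi$ up to a pointwise lower bound for the harmonic extension near $\mT$. Once this is in place, the inner--outer analysis underlying (A4) is essentially routine.
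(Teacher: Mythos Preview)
Your proposal is correct and matches the paper for (A1)--(A3), modulo one detail: for the well-definedness of $\iota$ on $\inv QC$ the paper simply cites a result of Nikolski, whereas your $VMO$ sketch, while on the right track, is compressed---a cleaner justification is that $\varphi \in \inv QC$ implies $\varphi \in \inv(H^\infty + C(\mT))$ (since $1/\varphi \in QC \subset H^\infty + C(\mT)$), whence Proposition~\ref{prop_Dou} applies. The genuine difference is in the hard direction of (A4). Having $|\varphi|\geq \epsilon$ on the annulus $1-\delta<|z|<1$ and winding number $0$, the paper applies the argument principle directly to the holomorphic $\varphi$ on the disks $|z|<r$ to conclude that $\varphi$ has no zeros in $\mD$, and then patches the annular bound with a compactness argument on $\{|z|\leq 1-\delta\}$ to obtain a global lower bound; no factorization theory is needed. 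Your inner--outer route also works, with the small caveat that a nontrivial singular inner function is only guaranteed to have radial limit $0$ at $\mu$-a.e.\ point of the support rather than at every point---but one such point is all you need. Your approach is more structural; the paper's is shorter. You also make explicit the final step $1/\varphi \in H^\infty \cap QC = QA$, which the paper leaves implicit.
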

\begin{proof} Since $QA$ is a commutative integral domain with
  identity, (A1) holds. 

  The set $QC$ is a unital ($1\in C(\mT)\subset QC$), commutative,
  complex, semisimple Banach algebra with the
  involution
$$
f^*(\zeta)=\overline{f(\zeta)} \quad (\zeta \in \mT).
$$
In fact, $QC$ is a $C^*$-subalgebra of $L^\infty(\mT)$. So (A2) holds
as well. 

\cite[Corollary~139, p.354]{Nik} says that if $\varphi\in \inv QC$,
then $T_\varphi$ is a Fredholm operator.  Thus it follows that the map
$\iota: \inv QC \rightarrow \mZ$ given by
$$
\iota (\varphi):=\textrm{Fredholm index of } T_{\varphi} \quad (\varphi \in \inv QC)
$$
is well-defined. If $\varphi, \psi \in \inv QC$, then in particular
they are elements of $H^\infty+C(\mT)$, and so the semicommutator 
$$
T_{\phi \psi}-T_{\phi} T_{\psi}
$$
is compact \cite[Lemma~133, p.350]{Nik}.  Since the Fredholm index is
invariant under compact perturbations (see e.g. \cite[Part B,
2.5.2(h)]{Nik}), it follows that the Fredholm index of $T_{\varphi
  \psi}$ is the same as that of $T_{\phi}T_{\psi}$. Consequently
(A3)(I1) holds.

Also, if $\varphi \in \inv QC$, then we have that 
\begin{eqnarray*}
\iota (\varphi^*)&=&\iota (\overline{\varphi})\\
&=& \textrm{Fredholm index of }T_{ \overline{\varphi}}\\
&=&\textrm{Fredholm index of }(T_{ \varphi})^*\\
&=&-(\textrm{Fredholm index of }T_{ \varphi})\\
&=&-\iota (\varphi).
\end{eqnarray*}
Hence (A3)(I2) holds.

The map sending the a Fredholm operator on a Hilbert space to its
Fredholm index is locally constant; see for example \cite[Part B,
2.5.1.(g)]{NikE}. For $\varphi \in L^\infty(\mT)$, $\|T_\varphi\|\leq
\|\varphi\|$, and so the map $\varphi\mapsto T_{\varphi}:\inv QC
\rightarrow \textrm{Fred}(H^2)$ is continuous. Consequently the map
$\iota$ is continuous from $\inv QC$ to $\mZ$ (where $\mZ$ has the
discrete topology). Thus (A3)(I3) holds.

Finally, we will show that (A4) holds as well. Let $\varphi \in
H^\infty \cap (\inv QC)$ be invertible as an element of $H^\infty $.
Then clearly $T_\varphi$ is invertible, and so has Fredholm index ind
$T_{\varphi}$ equal to $0$. Hence $\iota (\varphi)=0$. This finishes
the proof of the ``only if'' part in (A4).

Now suppose that $\varphi \in H^\infty \cap (\inv QC)$ and that $\iota
(\varphi)=0$. In particular, $\varphi$ is invertible as an element of $H^\infty +
C({\mathbb T})$ and the Fredholm index ind $T_{\varphi}$ of $T_{\varphi}$
is equal to 0. 
By Proposition~\ref{prop_Dou}, it follows that there exist $\delta,
\epsilon>0$ such that $|\Phi(re^{it})| \geq \epsilon$ for $ 1-\delta
<r<1$, where $\Phi$ is the harmonic extension of $\varphi$ to $\mD$.
But since $\varphi \in H^\infty$, its harmonic extension $\Phi$ is
equal to $\varphi$. So $|\varphi(re^{it})| \geq \epsilon$ for $
1-\delta <r<1$. Also since $\iota(\varphi)=0$, the winding number with
respect to the origin of the curve $\varphi(re^{it})$ for $1-\delta
<r<1$ is equal to $0$. By the Argument principle, it follows that $f$
cannot have any zeros inside $r\mT$ for $1-\delta <r<1$. In light of
the above, we can now conclude that there is an $\epsilon'>0$ such
that $|\varphi(z)|>\epsilon'$ for all $z\in \mD$.  Thus
$1/\varphi$ is in $H^{\infty}$ with $H^{\infty}$-norm at most $1/
\epsilon'$ and we conclude that $\varphi$ is invertible as an element
of $H^{\infty}$.  Consequently (A4) holds.
\end{proof}

\noindent In the definition of the $\nu$-metric given in
Definition~\ref{def_nu_metric} corresponding to
Lemma~\ref{lemma_disk_algebra}, the $\|\cdot\|_\infty$ now means the
  usual $L^\infty(\mT)$ norm.

\begin{lemma}
Let $A\in QC^{p\times m}$. Then 
$$
\|A\|=\|A\|_\infty:= \textrm{\em ess.sup}_{\zeta \in\mT}\nm A(\zeta)\nm.
$$
\end{lemma}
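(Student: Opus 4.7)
The plan is to reduce the matrix statement to the scalar case via the variational formula $\nm M\nm=\sup_{|x|=|y|=1}|y^*Mx|$ for the operator norm of a $p\times m$ complex matrix, and then to exploit that $QC$ is a commutative semisimple $C^*$-subalgebra of $L^\infty(\mT)$. For unit vectors $x\in\mC^m$, $y\in\mC^p$ the scalar $f_{x,y}:=y^*Ax$ is a $\mC$-linear combination of the entries of $A$ and hence lies in $QC$; since the Gelfand transform on a commutative $C^*$-algebra is an isometric $*$-isomorphism, one has $\|f_{x,y}\|_{L^\infty}=\max_{\varphi\in\mathfrak M}|y^*\mathbf A(\varphi)x|$, where $\mathbf A(\varphi)$ denotes the entry-wise Gelfand transform evaluated at $\varphi$.

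Given this identity, the inequality $\|A\|\le\|A\|_\infty$ follows by merely interchanging suprema: $\|A\|=\sup_\varphi\sup_{x,y}|y^*\mathbf A(\varphi)x|=\sup_{x,y}\|f_{x,y}\|_{L^\infty}\le\|A\|_\infty$, the last step being the pointwise bound $|y^*A(\zeta)x|\le\nm A(\zeta)\nm$ valid for a.e.~$\zeta\in\mT$.

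The reverse inequality $\|A\|_\infty\le\|A\|$ is where I expect the only real care to be needed: the unit sphere of $\mC^m\times\mC^p$ is uncountable, whereas the essential supremum is well-behaved only with respect to \emph{countable} families of measurable functions. The standard remedy is to fix a countable dense subset $\{(x_n,y_n)\}_{n\in\mN}$ of the product of the two unit spheres. By joint continuity of $(x,y)\mapsto|y^*A(\zeta)x|$ in the (finite-dimensional) arguments $x,y$, one has $\nm A(\zeta)\nm=\sup_n|f_{x_n,y_n}(\zeta)|$ for a.e.~$\zeta\in\mT$, so
$\|A\|_\infty=\sup_n\|f_{x_n,y_n}\|_{L^\infty}=\sup_n\max_\varphi|y_n^*\mathbf A(\varphi)x_n|\le\max_\varphi\nm\mathbf A(\varphi)\nm=\|A\|$.
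Apart from this measurability/separability step, the argument is routine $C^*$-algebraic bookkeeping.
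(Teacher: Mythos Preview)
Your argument is correct, and it takes a genuinely different route from the paper's. The paper reduces to the scalar case by packaging everything into the single $L^\infty$ function $\zeta\mapsto\sigma_{\max}(A(\zeta))$; it then passes through the maximal ideal space $M(L^\infty(\mT))$ (where the $L^\infty$ norm is automatically the Gelfand sup-norm), invokes the determinant identity $\det(\mu^2 I-A^*A)=0$ to show that $\widehat{\sigma_{\max}(A)}(\varphi)=\sigma_{\max}(\widehat A(\varphi))$, and finally uses that for $f\in QC$ one has $\max_{M(L^\infty)}|\widehat f|=\|f\|_{L^\infty}=\max_{M(QC)}|\widehat f|$ to descend from $M(L^\infty)$ to $M(QC)$. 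Your variational reduction via the linear forms $f_{x,y}=y^*Ax$ stays inside $QC$ and $M(QC)$ from the outset, replacing the singular-value/determinant machinery and the $M(L^\infty)$ detour by the Gelfand--Naimark isometry on $QC$ together with a countable-density trick to interchange the essential supremum with the uncountable family of unit vectors. Your version is arguably more elementary and works verbatim for any $C^*$-subalgebra of $L^\infty(\mT)$; the paper's version has the mild advantage of producing a single scalar symbol $\sigma_{\max}(A)$, but at the cost of the excursion through $M(L^\infty)$ and an implicit appeal to the surjectivity of the restriction map $M(L^\infty)\to M(QC)$.
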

\begin{proof} We have that 
\begin{eqnarray*}
  \|A\|_\infty&=& \textrm{ess.sup}_{\zeta \in\mT}\nm A(\zeta)\nm=
  \textrm{ess.sup}_{\zeta \in\mT} \sigma_{\scriptscriptstyle \textrm{max}} \Big(A(\zeta)\Big)\phantom{\displaystyle{\sup_{\in \mT}}}\\
  &=& 
  \max_{\varphi \in M(L^\infty(\mT))} \widehat{\sigma_{\scriptscriptstyle \textrm{max}} (A)}(\varphi)=
  \max_{\varphi \in M(L^\infty(\mT))} \sigma_{\scriptscriptstyle \textrm{max}}\Big(\widehat{A}(\varphi)\Big)\\
  &=& \max_{\varphi \in M(QC)} 
  \sigma_{\scriptscriptstyle \textrm{max}}\Big(\widehat{A}(\varphi)\Big)= \max_{\varphi \in M(QC)} \nm \widehat{A}(\varphi)\nm= \|A\|.
\end{eqnarray*}
In the above, the notation $\sigma_{\max}(X)$, for a complex matrix
$X\in \mC^{p\times m}$, means its largest singular value, that is, the
square root of the largest eigenvalue of $X^*X$ (or $XX^*$). We have
also used the fact that for an $f\in QC \subset
L^\infty(\mT)$, we have that 
$$
\max_{\varphi \in M(L^\infty(\mT))} \widehat{f}(\varphi)=\|f\|_{L^\infty(\mT)}=
 \max_{\varphi \in M(QC)} \widehat{f}(\varphi).
$$
Also, we have used the fact that if $\mu\in L^\infty(\mT)$ is such that 
$$
\det(\mu^2 I-A^*A)=0,
$$
then upon taking Gelfand transforms, we obtain
$$
\det((\widehat{\mu}(\varphi))^2
I-(\widehat{A}(\varphi))^*\widehat{A}(\varphi))=0\quad (\varphi\in
M(L^\infty(\mT))),
$$
to see that $\widehat{\sigma_{\scriptscriptstyle \textrm{max}}
  (A)}(\varphi)=\sigma_{\scriptscriptstyle
  \textrm{max}}(\widehat{A}(\varphi))$, $\varphi\in M(L^\infty(\mT))$.
\end{proof}

Finally, our scalar winding number condition 
$$
\det(G_1^* G_2) \in \inv QC \textrm{ and Fredholm index of }
  T_{\det (G_1^* G_2))}=0
$$
is exactly the same as the condition 
$$
T_{G_1^* G_2} \textrm{ is Fredholm with Fredholm index } 0
$$
in \eqref{eq_nu_metric_GV}. This is an immediate consequence of the
following result due to Douglas~\cite[p.13, Theorem 6]{Dou2}.

\begin{proposition}
  The matrix Toeplitz operator $T_\Phi$ with the matrix symbol
  $\Phi=[\varphi_{ij}]\in (H^\infty+C(\mT))^{n\times n}$ is Fredholm
  if and only if
$$
\inf_{\zeta \in \mT} |\det ( \varphi(\zeta))|>0,
$$
and moreover the Fredholm index of $T_\Phi$ is the negative of the
Fredholm index of $\det \Phi$.
\end{proposition}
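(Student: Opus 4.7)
The plan is to reduce the matrix statement to scalar Proposition~\ref{prop_Dou} by exploiting the fact that the Toeplitz algebra on $H^\infty+C(\mT)$-symbols is commutative modulo compact operators. The first step is to upgrade to matrices the scalar semicommutator lemma \cite[Lemma~133]{Nik} already used in the proof of Theorem~\ref{lemma_disk_algebra}: for $\Phi,\Psi\in (H^\infty+C(\mT))^{n\times n}$, each entry of $T_\Phi T_\Psi-T_{\Phi\Psi}$ is a finite sum of scalar semicommutators $T_{\phi_{ik}\psi_{kj}}-T_{\phi_{ik}}T_{\psi_{kj}}$ and hence compact, so $T_\Phi T_\Psi\equiv T_{\Phi\Psi}$ modulo compacts.

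For the ``if'' direction, I would argue as follows. Since $\det\Phi$ is a polynomial in the entries of $\Phi$, it still belongs to $H^\infty+C(\mT)$. If $\inf_{\zeta\in\mT}|\det\Phi(\zeta)|>0$, then Proposition~\ref{prop_Dou} applied to $\det\Phi$ and read through its harmonic-extension criterion shows $\det\Phi$ to be invertible in $H^\infty+C(\mT)$. Setting $\Psi:=(\det\Phi)^{-1}\mathrm{adj}(\Phi)\in (H^\infty+C(\mT))^{n\times n}$, we have $\Phi\Psi=\Psi\Phi=I_n$, and by the matrix semicommutator identity of the first step, $T_\Psi$ is a two-sided Fredholm parametrix for $T_\Phi$.

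For the ``only if'' direction, assume $T_\Phi$ is Fredholm and let $\pi$ denote the quotient onto the Calkin algebra $\calB(H^2)/\calK(H^2)$. The matrix semicommutator identity makes $\Phi\mapsto \pi(T_\Phi)$ a homomorphism from $(H^\infty+C(\mT))^{n\times n}$ into $n\times n$ matrices over the commutative $C^*$-subalgebra of the Calkin algebra generated by $\{\pi(T_f):f\in H^\infty+C(\mT)\}$. Invertibility of $\pi(T_\Phi)$ over this matrix algebra then forces invertibility of $\det(\pi(T_\Phi))=\pi(T_{\det\Phi})$, i.e.\ Fredholmness of $T_{\det\Phi}$, which by Proposition~\ref{prop_Dou} is equivalent to $\inf_\zeta|\det\Phi(\zeta)|>0$.

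The index formula is obtained by deforming $\Phi$ through $GL_n(H^\infty+C(\mT))$ to the diagonal matrix $\mathrm{diag}(\det\Phi,1,\ldots,1)$ via elementary row operations of determinant one, using continuity of $\Psi\mapsto T_\Psi$ and local constancy of the Fredholm index on the Fredholm operators. Since the diagonal matrix has Toeplitz operator $T_{\det\Phi}\oplus I\oplus\cdots\oplus I$ of index $\mathrm{ind}(T_{\det\Phi})$, the formula follows, up to Douglas's sign convention identifying the Fredholm index with minus the winding number. The main obstacle is this last step: one must verify that a concrete row-reduction path remains inside $GL_n(H^\infty+C(\mT))$ throughout and yields a norm-continuous family of matrix Toeplitz operators so that local constancy of the index can be invoked; for an arbitrary $\Phi$ the intermediate matrices need not have entries in $H^\infty+C(\mT)$ unless one is careful to use only operations whose entries are polynomial in those of $\Phi$ and in $(\det\Phi)^{-1}$.
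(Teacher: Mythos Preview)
The paper does not prove this proposition; it is quoted verbatim as a result of Douglas \cite[p.~13, Theorem~6]{Dou2} and used as a black box. Your reduction via compactness of semicommutators on $H^\infty+C(\mT)$, passage to the commutative $C^*$-subalgebra of the Calkin algebra, and identification of Fredholmness of $T_\Phi$ with that of $T_{\det\Phi}$ is exactly the standard route to Douglas's theorem, so in that sense there is nothing to compare.

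There is, however, a genuine gap in your ``if'' direction which is worth flagging because it reflects an imprecision in the paper's transcription of Douglas's result. You assert that $\inf_{\zeta\in\mT}|\det\Phi(\zeta)|>0$ together with Proposition~\ref{prop_Dou} gives invertibility of $\det\Phi$ in $H^\infty+C(\mT)$. This is false: take $n=1$ and $\Phi$ a singular inner function, say $\Phi(z)=\exp\!\big((z+1)/(z-1)\big)$. Then $|\Phi|=1$ a.e.\ on $\mT$, yet the harmonic extension (namely $\Phi$ itself) tends to $0$ along the radius to $1$, so $T_\Phi$ is not Fredholm. The correct hypothesis in Douglas's theorem is invertibility of $\det\Phi$ in $H^\infty+C(\mT)$, equivalently the harmonic-extension condition of Proposition~\ref{prop_Dou}; with that correction your argument goes through. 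For the index step, your worry about keeping a row-reduction path inside $GL_n(H^\infty+C(\mT))$ can be bypassed: work in the Calkin algebra, where the image lies in $GL_n(A)$ for the commutative $C^*$-algebra $A\cong C(X)$ generated by the $\pi(T_f)$. Since $SL_n(\mC)$ is connected, $SL_n(C(X))$ is path-connected, so $\pi(T_\Phi)$ and $\pi(T_{\mathrm{diag}(\det\Phi,1,\dots,1)})$, having the same determinant $\pi(T_{\det\Phi})$, lie in the same component of $GL_n(A)$ and hence of the invertibles in the Calkin algebra; local constancy of the index then gives $\mathrm{ind}\,T_\Phi=\mathrm{ind}\,T_{\det\Phi}$ directly.
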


Thus our abstract metric reduces to the same metric given in
\eqref{eq_nu_metric_GV}, that is, for plants $P_1,P_2\in \mS(QA,p,m)$,
with the normalized left/right coprime factorizations
\begin{eqnarray*}
P_1&=& N_{1} D_{1}^{-1}= \widetilde{D}_{1}^{-1} \widetilde{N}_{1},\\
P_2&=& N_{2} D_{2}^{-1}= \widetilde{D}_{2}^{-1} \widetilde{N}_{2},
\end{eqnarray*}
define
\begin{equation}
\label{eq_nu_metric_aaa}
d_{\nu} (P_1,P_2 ):=\left\{
\begin{array}{ll}
  \|\widetilde{G}_{2} G_{1}\|_{\infty} &
  \textrm{if } \det(G_1^* G_2) \in \inv QC \textrm{ and }\\
& \quad \textrm{ Fredholm index of }
  T_{\det (G_1^* G_2)}=0, \\
  1 & \textrm{otherwise}. \end{array}
\right.
\end{equation}

Summarizing, our main result is the following.

\begin{corollary}
  $d_\nu$ given by \eqref{eq_nu_metric_aaa} is a metric on
  $\mS(QA,p,m)$. Moreover, if $P_0,P\in \mS(QA,p,m)$ and $C\in \mS(QA,
  m,p)$, then
$$
  \mu_{P,C} \geq \mu_{P_0,C}-d_{\nu}(P_0,P).
$$
\end{corollary}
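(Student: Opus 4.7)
My plan is to reduce both claims of the corollary to the two abstract theorems of Section~\ref{section_abstract_nu_metric} by invoking the concrete instance of the setup just established. As the first step, I would apply Theorem~\ref{lemma_disk_algebra} to verify that the quadruple $(R,S,G,\iota)=(QA,\,QC,\,\mZ,\,\varphi\mapsto\textrm{Fredholm index of } T_\varphi)$ satisfies assumptions (A1)--(A4) of \cite{BalSas}. With this in place, Theorem~\ref{thm_d_nu_is_a_metric} yields that the abstract $d_\nu$ of Definition~\ref{def_nu_metric}, built from this data, is a metric on $\mS(QA,p,m)$; and the abstract stability margin inequality stated as the second theorem in Section~\ref{section_abstract_nu_metric} furnishes $\mu_{P,C}\geq \mu_{P_0,C}-d_\nu(P_0,P)$ verbatim.

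The next step is to verify that the abstract $d_\nu$ coincides pointwise with the concrete expression~\eqref{eq_nu_metric_aaa}. There are two pieces to check. First, the norm $\|\widetilde{G}_2 G_1\|$ of Definition~\ref{def_sup_norm}, computed via Gelfand transforms over the maximal ideal space of $QC$, equals the $L^\infty(\mT)$ norm $\|\widetilde{G}_2 G_1\|_\infty$; this is exactly the content of the preceding lemma. Second, the condition $\det(G_1^* G_2)\in \inv QC$ together with $\iota(\det(G_1^* G_2))=0$ must match the Vinnicombe condition that $T_{G_1^* G_2}$ be Fredholm with Fredholm index $0$. Here I would appeal to the Douglas proposition from \cite{Dou2}: $T_{G_1^*G_2}$ is Fredholm iff $\inf_{\zeta\in\mT}|\det(G_1^*G_2)(\zeta)|>0$, which is equivalent to $\det(G_1^*G_2)\in \inv QC$ (since $\det(G_1^*G_2)\in QC$ and $QC$ is a $C^*$-subalgebra of $L^\infty(\mT)$, so invertibility in $QC$ coincides with essential boundedness away from zero), and the equality $\ind T_{G_1^*G_2}=-\ind T_{\det(G_1^*G_2)}$ forces the two vanishing-index conditions to coincide.

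The only step that could be viewed as a potential obstacle is ensuring that the bookkeeping between the abstract and concrete setups is consistent: one needs that $\det(G_1^*G_2)$ indeed lies in $QC$ (immediate since $G_1,G_2$ have entries in $QA\subset QC$ and $QC$ is closed under complex conjugation, hence under the involution $\cdot^*$), and that the abstract involution on $QC$ is pointwise complex conjugation on $\mT$ so as to agree with the $\cdot^*$ appearing in~\eqref{eq_nu_metric_GV}. The substantive work --- particularly the verification of (A4) via Proposition~\ref{prop_Dou}, passing from a nonvanishing harmonic extension to invertibility in $H^\infty$ --- has already been carried out inside Theorem~\ref{lemma_disk_algebra}, so no further difficulty arises, and the corollary follows by assembling the above ingredients.
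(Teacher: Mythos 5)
Your proposal is correct and follows exactly the route the paper intends: Theorem~\ref{lemma_disk_algebra} supplies (A1)--(A4), the two abstract theorems of Section~\ref{section_abstract_nu_metric} then give both claims, and the preceding lemma together with the Douglas proposition identify the abstract metric with the concrete formula~\eqref{eq_nu_metric_aaa}. No discrepancy with the paper's (implicit) argument.
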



\begin{thebibliography}{10}


\bibitem{BalSas}
J.A. Ball and A.J. Sasane.
Extension of the $\nu$-metric.
{\em Complex Analysis and Operator Theory},  to appear.

\bibitem{Dou}
R.G. Douglas.
{\em Banach algebra techniques in operator theory}.
Second edition. Graduate Texts in Mathematics, 179,
Springer-Verlag, New York, 1998.

\bibitem{Dou2}
R.G. Douglas.
{\em Banach algebra techniques in the theory of Toeplitz operators.}
Expository Lectures from the CBMS Regional Conference held at the University of Georgia, Athens, Ga., 
June 12--16, 1972. Conference Board of the Mathematical Sciences Regional Conference Series in Mathematics, No. 15. 
American Mathematical Society, Providence, R.I., 1973.

\bibitem{Gar}
J.B. Garnett. 
{\em Bounded analytic functions}.
Revised first edition. Graduate Texts in Mathematics, 236. 
Springer, New York, 2007. 

\bibitem{Nik}
N.K. Nikolski. 
{\em Treatise on the shift operator.
Spectral function theory. With an appendix by S.V. Khrushch\"ev and V.V. Peller.} 
Translated from the Russian by Jaak Peetre. 
Grundlehren der Mathematischen Wissenschaften, 273. Springer-Verlag, Berlin, 1986.

\bibitem{NikE}
N.K. Nikolski. 
{\em Operators, functions, and systems: an easy reading. Vol. 1. 
Hardy, Hankel, and Toeplitz}. 
Translated from the French by Andreas Hartmann. 
Mathematical Surveys and Monographs, 92. American Mathematical Society, Providence, RI, 2002.

\bibitem{Vin}
G. Vinnicombe.
Frequency domain uncertainty and the graph topology.
{\em IEEE Transactions on Automatic Control},
no. 9, 38:1371-1383, 1993.


\end{thebibliography}
\end{document}